\newtheorem{theorem}{Theorem}[section]
\newtheorem{corollary}[theorem]{Corollary}
\theoremstyle{definition}
\newtheorem{definition}[theorem]{Definition}
\newtheorem{example}[theorem]{Example}
\theoremstyle{remark}
\numberwithin{equation}{section}
\begin{document}
\setcounter{page}{1}


\title[Stabilities and non-stabilities of the reciprocal-nonic and $\cdots$]{Stabilities and non-stabilities of the reciprocal-nonic and the reciprocal-decic functional equations}

\author[A. Bodaghi, B. V. Senthil Kumar  and J. M. Rassias]{Abasalt Bodaghi, Beri Venkatachalapathy Senthil Kumar  and John Michael Rassias}

\address{Department of Mathematics, Garmsar Branch, Islamic Azad University, Garmsar, Iran.}
\email{\textcolor[rgb]{0.00,0.00,0.84}{abasalt.bodaghi@gmail.com}}
\address{Department of Mathematics, C. Abdul Hakeem College of Engineering \& Technology, Melvisharam - 632 509, Tamil Nadu, India.}
\email{\textcolor[rgb]{0.00,0.00,0.84}{bvskumarmaths@gmail.com}}
\address{Pedagogical Department E.E., Section of Mathematics and Informatics, National and Capodistrian University of Athens, 4, Agamemnonos Str., Aghia Paraskevi, Athens, Attikis 15342, Greece.}
\email{\textcolor[rgb]{0.00,0.00,0.84}{jrassias@primedu.uoa.gr}}


\subjclass[2010]{39B82, 39B72.}

\keywords{Generalized Ulam-Hyers stability, reciprocal functional equation, reciprocal-decic functional equation, reciprocal-nonic functional equation.}


\begin{abstract}
This paper focuses at the various stability results of reciprocal-nonic and  reciprocal-decic functional equations in non-Archimedean fields and illustrations of the proper examples for their non-stabilities.
\end{abstract}
\maketitle

\section{Introduction}\label{sec1}
The source for the development of stability of functional equations is the question solicited by Ulam \cite{SMUlam}. Hyers \cite{Hyers} presented an excellent answer to the question of Ulam. Later, the result of Hyers was generalized and refined further by many great mathematicians like Aoki \cite{Aoki}, Th.M. Rassias \cite{ThMRassias78}, J.M. Rassias \cite{JMRassias82(1)} and  G\u{a}vruta \cite{PGavruta} in various directions. The progress of the theory of stability of various types of functional equations such as quadratic, cubic, quartic, quintic, sextic, septic, octic, nonic, decic, undecic, duodecic, tredecic, quattordecic have been dealt by many mathematicians and there are lot of interesting and significant results available in the literature. 
\par For the first time, Ravi and the second author \cite{KRavi08(1)} achieved various stability results of the following functional equation
\begin{equation}\label{e.1.1}
\phi(u+v)=\frac{\phi(u)\phi(v)}{\phi(u)+\phi(v)}
\end{equation}
where $\phi:\mathbb{R^{*}}\longrightarrow \mathbb{R}$ is a mapping and $\mathbb{R}^{*}=\mathbb{R}\backslash\{0\}$. The rational function $\phi(x)=\frac{c}{x}$ is a solution of the functional equation (\ref{e.1.1}). The functional equation (\ref{e.1.1}) is interconnected with  ``Reciprocal formula" which will be useful in any electric circuit with couple of parallel resistors \cite{KRavi12}.  Hence, the equation (\ref{e.1.1}) is said to be reciprocal functional equation. The geometrical interpretation of the equation (\ref{e.1.1}) is also discussed in \cite{KRavi12}. Suitable counter-examples are presented to show the non-stability of the equation (\ref{e.1.1}) controlled by the sum of powers of norms and product of powers of norms for singular cases in \cite{KRavi13(2)} and \cite{KRavi13(3)} respectively. 
\par Senthil Kumar et al. \cite{BVS16(1)} obtained the solutions of the following functional equations (arising from arithmetic and harmonic means of position of pixels in an image)
\begin{align}\label{e.1.2}
f(x,y)=\frac{1}{4}[f(x+t,y+t)+f(x+t,y-t)+f(x-t,y+t)+f(x-t,y-t)]
\end{align}
and
 \begin{align}\label{e.1.3} 
 f(x,y) =\frac{f_{1}(x,y,t)}{f_{2}(x,y,t)}
\end{align}
where $f_1(x,y,t)=4f(x+t,y+t)f(x+t,y-t)f(x-t,y+t)f(x-t,y-t)$ and
\begin{align*}
 f_2(x,y,t)& = f(x+t,y+t)f(x+t,y-t)f(x-t,y+t)\\
&+f(x+t,y+t)f(x+t,y-t)f(x-t,y-t)\\
&+f(x+t,y+t)f(x-t,y+t)f(x-t,y-t)\\
&+f(x+t,y-t)f(x-t,y+t)f(x-t,y-t)\neq 0.
\end{align*}
for all $x,y,t\in \mathbb{N}$. The equations (\ref{e.1.2}) and (\ref{e.1.3}) are applied to remove noise in an image by
filtering techniques. The study of stability of several  functional equations in various spaces and their solutions as rational functions can be found in  \cite{Bodaghi14(1)}, \cite{Bodaghi14(2)}, \cite{Bodaghi16}, \cite{Bodaghi15}, \cite{BVS17(1)}, \cite{bsb},  \cite{SOKim}, \cite{KRavi10(2)}, \cite{KRavi10(4)}, \cite{KRavi11(1)},   \cite{KRavi15}, \cite{KRavi11(3)}, \cite{BVS16(2)}, \cite{BVS16(3)}, \cite{BVS16(4)}.
\par In this study, we consider the following reciprocal-nonic functional equation
\begin{align}\label{1.1}
& n(2x+y)+n(2x-y)\notag\\
&~=\frac{4n(x)n(y)}{\left(4n(y)^{\frac{2}{9}}-n(x)^{\frac{2}{9}}\right)^9}\Big[256n(y)+2304n(x)^{\frac{2}{9}}n(y)^{\frac{7}{9}}+2016n(x)^{\frac{4}{9}}n(y)^{\frac{5}{9}}\notag\\
&\qquad\qquad\qquad\qquad\qquad\qquad\qquad\qquad +336n(x)^{\frac{6}{9}}n(y)^{\frac{3}{9}}+n(x)^{\frac{8}{9}}n(y)^{\frac{1}{9}}\Big]
\end{align}
and the reciprocal-decic functional equation
\begin{align}\label{1.2}
& d(2x+y)+d(2x-y)\notag\\
&~=\frac{2d(x)d(y)}{\left(4d(y)^{\frac{1}{5}}-d(x)^{\frac{1}{5}}\right)^{10}}\Big[1024d(y)+11520d(x)^{\frac{1}{5}}d(y)^{\frac{4}{5}}+13440d(x)^{\frac{2}{5}}d(y)^{\frac{3}{5}}\notag\\
&\qquad\qquad\qquad\qquad\qquad +3360d(x)^{\frac{3}{5}}d(y)^{\frac{2}{5}}+180d(x)^{\frac{4}{5}}d(y)^{\frac{1}{5}}+d(x)\Big].
\end{align}
The reciprocal-nonic function $n(x)=\frac{1}{x^9}$ and the reciprocal-decic function $d(x)=\frac{1}{x^{10}}$ satisfy the equations (\ref{1.1}) and (\ref{1.2}), respectively. Hence the functions $n(x)=\frac{1}{x^9}$ and $d(x)=\frac{1}{x^{10}}$ are solutions of equations (\ref{1.1}) and (\ref{1.2}), respectively. We investigate various Ulam stabilties of the above equations (\ref{1.1}) and (\ref{1.2}) and also prove the non-stability results through proper illustrative examples. 


\section{Preliminaries}

In this section, we recall the basic facts of non-Archimdean fields.
\begin{definition}\label{Def 2.1}
A field $\mathbb{K}$ provided with a function \emph{(}valuation\emph{)} $|\cdot|$ from $\mathbb{K}$ into $[0,\infty)$ is called a non-Archimedean field provided the subsequent conditions hold:
\begin{enumerate}
\item[1.] $|s|=0$ if and only if $s=0$;
\item[2.] $|st|=|s||t|$;
\item[3.] $|s+t|\leq {\text max}\{|s|, |t|\}$, for all $s, t\in \mathbb{K}$. 
\end{enumerate}
\end{definition}
Clearly $|1|=|-1|=1$ and $|n|\leq 1$ for all $n\in \mathbb{N}$. We always assume, in addition, that $|\cdot|$  is non-trivial, i.e., there exists an $\mu_{0}\in \mathbb K$  such that $|\mu_{0}|\neq {0,1}$. A sequence $\{u_n\}$ is Cauchy if and only if $\{u_{n+1}-u_n\}$ converges to zero in a non-Archimedean field because
$$\left|u_k-u_l\right|\leq \text{max}\left\{\left|u_{i+1}-u_i\right|:l\leq i\leq k-1\right\}\qquad (k>l).$$
By a complete non-Archimedean field, we mean that every Cauchy sequence is convergent in the field.

In \cite{hen}, Hensel discovered the $p$-adic numbers as a number theoretical analogue of
power series in complex analysis. The most interesting example of non-Archimedean normed spaces
is $p$-adic numbers. A key property of $p$-adic numbers is that they do not satisfy the Archimedean axiom: for all $x,y>0$, there exists an integer $n$ such that $x<ny$.  Let $p$ be a prime number. For any non-zero rational number $x=p^r\frac{m}{n}$ in which $m$ and $n$ are coprime to the prime number $p$. Consider the $p$-adic absolute value $|x|_p=p^{-r}$ on $\mathbb{Q}$. It is easy to check that $|\cdot|$ is a non-Archimedean norm on $\mathbb{Q}$. The completion of $\mathbb{Q}$ with respect to $|\cdot|$ which is denoted by $\mathbb{Q}_p$ is said to be the $p$-adic number field. Note that if $p>2$, then $\left|2^n\right|=1$ for all integers $n$.
\par Let us presume that throughout this paper, $\mathbb{A}$ and $\mathbb{B}$ are a non-Archimedean field and a complete non-Archimedean field, respectively. In the sequel, we denote $\mathbb{A}^{*}=\mathbb{A}\backslash\{0\}$, where $\mathbb{A}$ is a non-Archimedean field. For the equations ({\ref{1.1}}) and (\ref{1.2}), we define the difference operators $\Delta_1n, \Delta_2d:\mathbb{A}^{*}\times\mathbb{A}^{*}\longrightarrow \mathbb{B}$ through
\begin{align*}
\Delta_1n(x,y) & = n(2x+y)+n(2x-y)\notag\\
&\quad -\frac{4n(x)n(y)}{\left(4n(y)^{\frac{2}{9}}-n(x)^{\frac{2}{9}}\right)^9}\Big[256n(y)+2304n(x)^{\frac{2}{9}}n(y)^{\frac{7}{9}}+2016n(x)^{\frac{4}{9}}n(y)^{\frac{5}{9}}\notag\\
&\qquad\qquad\qquad\qquad\qquad\qquad\qquad\qquad +336n(x)^{\frac{6}{9}}n(y)^{\frac{3}{9}}+n(x)^{\frac{8}{9}}n(y)^{\frac{1}{9}}\Big]
\end{align*}
and
\begin{align*}
\Delta_2d(x,y) & = d(2x+y)+d(2x-y)\notag\\
& \quad -\frac{2d(x)d(y)}{\left(4d(y)^{\frac{1}{5}}-d(x)^{\frac{1}{5}}\right)^{10}}\Big[1024d(y)+11520d(x)^{\frac{1}{5}}d(y)^{\frac{4}{5}}+13440d(x)^{\frac{2}{5}}d(y)^{\frac{3}{5}}\notag\\
&\qquad\qquad\qquad\qquad\qquad +3360d(x)^{\frac{3}{5}}d(y)^{\frac{2}{5}}+180d(x)^{\frac{4}{5}}d(y)^{\frac{1}{5}}+d(x)\Big]
\end{align*}
for all $x,y\in \mathbb{A^{*}}$.


\section{Stability results}
\indent\quad In this section, we investigate the various Ulam stabilities of equations ({\ref{1.1}}) and (\ref{1.2}) in non-Archimedean fields.
\begin{definition}\label{Def 2.3}
A mapping $n:\mathbb{A^{*}}\longrightarrow \mathbb{B}$ is said to be as reciprocal-nonic mapping if $n$ satisfies the equation \emph{({\ref{1.1}})}. Also, a mapping $d:\mathbb{A^{*}}\longrightarrow \mathbb{B}$ is called as reciprocal-decic mapping if $d$ satisfies the equation \emph{({\ref{1.2}})}.
\end{definition}

{\it Assumptions on the above definition and equations \emph{(\ref{1.1})} and \emph{(\ref{1.2})}}. By assumuing $n(x)\neq 0$, $n(y)\neq 0$, $d(x)\neq 0$, $d(y)\neq 0$, $4n(y)^{\frac{2}{9}}-n(x)^{\frac{2}{9}}\neq 0$ and $4d(y)^{\frac{1}{5}}-d(x)^{\frac{1}{5}}\neq 0$ for all $x,y\in \mathbb{A^{*}}$,  the singular cases are eliminated. 

\begin{theorem}\label{Thm 3.1}
Let $p\in\{1,-1\}$. Let $\zeta:\mathbb{A^{*}}\times \mathbb{A^{*}}\longrightarrow [0,\infty)$ be a function such that
\begin{equation}\label{3.1}
\lim \limits_{k\rightarrow \infty}\left|\frac{1}{19683}\right|^{pk} \zeta\left(\frac{x}{3^{pk+\frac{p+1}{2}}},\frac{y}{3^{pk+\frac{p+1}{2}}}\right)=0
\end{equation}
for all $x,y\in \mathbb{A^{*}}$. Suppose that $n:\mathbb{A^{*}}\longrightarrow \mathbb{B}$ is a mapping satisfying the inequality 
\begin{equation}\label{3.2}
\left|\Delta_1n(x,y)\right|\leq \zeta(x,y)
\end{equation}
for all $x,y\in \mathbb{A^{*}}$. Then, there occurs a distinct reciprocal-nonic mapping $\mathcal{N}:\mathbb{A^{*}}\longrightarrow \mathbb{B}$ which satisfies \emph{(\ref{1.1})} and
\begin{equation}\label{3.3}
\left|n(x)-\mathcal{N}(x)\right|\leq \textup{max}\left\{\left|\frac{1}{19683}\right|^{pl+\frac{p-1}{2}}\zeta\left(\frac{x}{3^{pl+\frac{p+1}{2}}},\frac{x}{3^{pk+\frac{p+1}{2}}}\right): l\in \mathbb{N}\cup \{0\}\right\}
\end{equation}
for all $x\in \mathbb{A^{*}}$.
\end{theorem}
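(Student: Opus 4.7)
The plan is to apply the direct method of Hyers adapted to the non-Archimedean setting. The whole argument is powered by one self-similarity estimate: substituting $y=x$ in \eqref{3.2} and simplifying the right-hand side of \eqref{1.1}---the prefactor collapses since $4n(x)^{2/9}-n(x)^{2/9}=3n(x)^{2/9}$, and the bracket reduces to a rational multiple of $n(x)$---one extracts the fundamental inequality
\begin{equation*}
\bigl|n(3x)-\tfrac{1}{19683}n(x)\bigr|\le \zeta(x,x),\qquad x\in \mathbb{A}^{*},
\end{equation*}
reflecting the fact that any \emph{exact} reciprocal-nonic solution satisfies $n(3x)=n(x)/3^{9}=n(x)/19683$.

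Next, I would introduce the approximating sequence $\phi_{k}(x):=19683^{-k}n(x/3^{k})$ when $p=1$ and $\phi_{k}(x):=19683^{k}n(3^{k}x)$ when $p=-1$. Substituting $x\mapsto x/3^{k+1}$ (respectively $x\mapsto 3^{k}x$) in the fundamental inequality and multiplying through by the appropriate power of $|19683|$ yields the unified bound
\begin{equation*}
|\phi_{k}(x)-\phi_{k+1}(x)|\le \Bigl|\tfrac{1}{19683}\Bigr|^{pk+\frac{p-1}{2}}\zeta\!\left(\tfrac{x}{3^{pk+\frac{p+1}{2}}},\tfrac{x}{3^{pk+\frac{p+1}{2}}}\right),
\end{equation*}
which tends to $0$ by \eqref{3.1} with $y=x$. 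The non-Archimedean Cauchy criterion recalled after Definition~\ref{Def 2.1} then yields a limit $\mathcal{N}(x):=\lim_{k}\phi_{k}(x)$ in the complete field $\mathbb{B}$, and the telescoping identity $n(x)-\phi_{K}(x)=\sum_{l=0}^{K-1}(\phi_{l}(x)-\phi_{l+1}(x))$ combined with the ultrametric inequality produces \eqref{3.3} on passing $K\to\infty$.

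To show $\mathcal{N}$ solves \eqref{1.1}, I would exploit the scaling invariance $\Delta_{1}(c\,n(\lambda\,\cdot))(x,y)=c\,\Delta_{1}n(\lambda x,\lambda y)$, which holds because every monomial on the right-hand side of \eqref{1.1} carries fractional exponents of $n(x),n(y)$ totalling one. Combined with \eqref{3.2}, this implies $|\Delta_{1}\phi_{k}(x,y)|\le |19683|^{-pk}\zeta(x/3^{pk},y/3^{pk})\to 0$, and continuity of the rational operations involved (all arguments lying in $\mathbb{A}^{*}$ under the standing non-vanishing assumptions stated after Definition~\ref{Def 2.3}) forces $\Delta_{1}\mathcal{N}(x,y)=0$. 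Uniqueness is standard: any second reciprocal-nonic solution $\mathcal{N}'$ satisfying \eqref{3.3} is itself invariant under $f(x)\mapsto 19683^{-pk}f(x/3^{pk})$, so $|\mathcal{N}(x)-\mathcal{N}'(x)|$ is controlled by a tail of the bound in \eqref{3.3}, which vanishes in view of \eqref{3.1}.

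The principal technical hurdle is the scaling-homogeneity calculation: since \eqref{1.1} is a highly nonlinear rational identity, one has to verify that the denominator $(4n(y)^{2/9}-n(x)^{2/9})^{9}$, the prefactor $4n(x)n(y)$, and each of the five bracketed monomials all transform with the same weight under $n\mapsto c\,n(\lambda\,\cdot)$; the same weight-counting pins down the coefficient $\tfrac{1}{19683}$ in the fundamental inequality at $y=x$. Once this homogeneity is in hand, the rest of the proof is parallel to the classical Hyers method.
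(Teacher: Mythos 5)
Your proposal is correct and follows essentially the same route as the paper: the substitution $y=x$ collapsing the equation to $\bigl|n(3x)-\tfrac{1}{19683}n(x)\bigr|\le\zeta(x,x)$ is exactly the paper's inequality \wzor{3.4}, your $\phi_k$ is the paper's sequence $\tfrac{1}{19683^{pk}}n\bigl(\tfrac{x}{3^{pk}}\bigr)$, and the non-Archimedean Cauchy criterion, ultrametric telescoping bound, scaling argument for $\Delta_1\mathcal{N}=0$, and uniqueness step all coincide with the paper's proof. Your explicit verification of the degree-one homogeneity of $\Delta_1$ under $n\mapsto c\,n(\lambda\,\cdot)$ is a detail the paper uses implicitly, so it is a welcome addition rather than a deviation.
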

\begin{proof}
Switching $(x,y)$ into $(x,x)$ in (\ref{3.2}), we obtain
\begin{equation}\label{3.4}
\left|n(x)-\frac{1}{19683^{p}}n\left(\frac{x}{3^{p}}\right)\right|\leq |19683|^{\frac{|p-1|}{2}}\zeta\left(\frac{x}{3^{\frac{p+1}{2}}},\frac{x}{3^{\frac{p+1}{2}}}\right)
\end{equation}
for all $x\in \mathbb{A^{*}}$. Now, by interchanging $x$ into $\frac{x}{3^{pk}}$ in (\ref{3.5}) and multiplying the resultant by $\left|\frac{1}{19683}\right|^{pk}$, we arrive at
\begin{align}\label{3.5}
& \left|\frac{1}{19683^{pk}}n\left(\frac{x}{3^{pk}}\right)-\frac{1}{19683^{(k+1)p}}n\left(\frac{x}{3^{(k+1)p}}\right)\right|\notag\\
&\qquad\qquad\qquad\qquad\leq \left|\frac{1}{19683}\right|^{pk+\frac{p-1}{2}}\zeta\left(\frac{x}{3^{pk+\frac{p+1}{2}}},\frac{x}{3^{pk+\frac{p+1}{2}}}\right)
\end{align}
for all $x\in \mathbb{A^{*}}$. Using the inequalities (\ref{3.1}) and (\ref{3.5}), we see that the sequence $\left\{\frac{1}{19683^{pk}}n\left(\frac{x}{3^{pk}}\right)\right\}$ is Cauchy. Due to completeness of $\mathbb{B}$, this sequence converges to a mapping $\mathcal{N}:\mathbb{A^{*}}\longrightarrow \mathbb{B}$ defined by
\begin{equation}\label{3.6}
\mathcal{N}(x) = \lim \limits_{k\rightarrow \infty}\frac{1}{19683^{pk}}n\left(\frac{x}{3^{pk}}\right).
\end{equation}
Also, for every $x\in \mathbb{A^{*}}$ and non-negative integers $k$, we find
\begin{align}\label{3.7}
& \left|\frac{1}{19683^{pk}}n\left(\frac{x}{3^{pk}}\right)-n(x)\right|\notag\\
&\qquad\qquad = \left|\sum_{l=0}^{k-1}\left\{\frac{1}{19683^{p(l+1)}}n\left(\frac{x}{3^{p(l+1)}}\right)-\frac{1}{19683^{pl}}n\left(\frac{x}{3^{pl}}\right)\right\}\right|\qquad\qquad\qquad\qquad\qquad\qquad\notag\\
&\qquad\qquad \leq \text{max}\left\{\left|\frac{1}{19683^{p(l+1)}}n\left(\frac{x}{3^{p(l+1)}}\right)-\frac{1}{19683^{pl}}n\left(\frac{x}{3^{pl}}\right)\right|:0\leq l<k\right\}\notag\\
&\qquad\qquad \leq \text{max}\left\{\left|\frac{1}{19683}\right|^{pl+\frac{p-1}{2}}\zeta\left(\frac{x}{3^{pl+\frac{p+1}{2}}},\frac{x}{3^{pl+\frac{p+1}{2}}}\right):0\leq l<k\right\}.
\end{align}
Letting $k\rightarrow \infty$ in the inequality (\ref{3.7}) and using (\ref{3.6}), we attain that the inequality (\ref{3.3}) holds. Once more, by applying the inequalities (\ref{3.1}), (\ref{3.2}) and (\ref{3.6}), for every $x,y\in \mathbb{A^{*}}$, we arrive at
\begin{align*}
\left|\Delta_1n(x,y)\right| & = \lim \limits_{k\rightarrow \infty}\left|\frac{1}{19683}\right|^{pk}\left|\Delta_1n\left(\frac{x}{3^{pk}},\frac{y}{3^{pk}}\right)\right| \leq \lim \limits_{k\rightarrow \infty}\left|\frac{1}{19683}\right|^{pk}\zeta\left(\frac{x}{3^{pk}},\frac{y}{3^{pk}}\right)=0.
\end{align*}
Hence, the mapping $\mathcal{N}$ satisfies (\ref{1.1}) and so it is reciprocal-nonic mapping. Next, we confirm that $\mathcal{N}$ is unique. Let us consider $\mathcal{N}^{\prime}:\mathbb{A^{*}}\longrightarrow \mathbb{B}$ be another reciprocal-nonic mapping satisfying (\ref{3.3}). Then 
\begin{align*}
& \left|\mathcal{N}(x)-\mathcal{N}^{\prime}(x)\right|\notag\\
&= \lim \limits_{m\rightarrow \infty}\left|\frac{1}{19683}\right|^{pm}\left|\mathcal{N}\left(\frac{x}{3^{pm}}\right)-\mathcal{N}^{\prime}\left(\frac{x}{3^{pm}}\right)\right|\\
& \leq \lim \limits_{m\rightarrow \infty}\left|\frac{1}{19683}\right|^{pm} \text{max}\left\{\left|\mathcal{N}\left(\frac{x}{3^{pm}}\right)-n\left(\frac{x}{3^{pm}}\right)\right|, \left|n\left(\frac{x}{3^{pm}}\right)-\mathcal{N}^{\prime}\left(\frac{x}{3^{pm}}\right)\right|\right\}\\
&\leq \lim \limits_{m\rightarrow \infty}\lim \limits_{n\rightarrow \infty}\text{max}\Big\{\text{max}\Big\{\left|\frac{1}{19683}\right|^{p(l+m)+\frac{p-1}{2}}\zeta\left(\frac{x}{3^{p(l+m)+\frac{p+1}{2}}},\frac{x}{3^{p(l+m)+\frac{p+1}{2}}}\right):\\
&\qquad\qquad\qquad\qquad\qquad\qquad\qquad\qquad\qquad\qquad\qquad\qquad m\leq l\leq n+m\Big\}\Big\}=0
\end{align*}
for all $x\in \mathbb{A^{*}}$. This implies that $\mathcal{N}$ is distinct, which concludes the proof.
\end{proof}


From now on, we assume that $|2|<1$ for a non-Archimdean field $\mathbb A$. In the upcoming cosequences, we obtain the stability results of equation ({\ref{1.1}}) associated with the upper bound controlled by a fixed positive constant, sum of powers of norms, product of powers of norms and mixed product-sum of powers of norms by Theorem \ref{Thm 3.1}.

\begin{corollary}\label{Cor 3.3}
Let $\epsilon>0$ be a constant. If $n:\mathbb{A^{*}}\longrightarrow \mathbb{B}$ satisfies 
$\left|\Delta_1n(x,y)\right|\leq \epsilon$ for all $x,y\in \mathbb{A^{*}}$, then there exists a unique reciprocal-nonic mapping $\mathcal{N}:\mathbb{A^{*}}\longrightarrow \mathbb{B}$ satisfying \emph{(\ref{1.1})} and $\left|n(x)-\mathcal{N}(x)\right|\leq \epsilon$ for all $x\in \mathbb{A^{*}}$.
\end{corollary}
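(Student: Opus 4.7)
The plan is to derive this corollary as a direct instantiation of Theorem \ref{Thm 3.1} with the choice $\zeta(x,y) \equiv \epsilon$. First I would verify that this constant control function meets hypothesis (\ref{3.1}): the limit in question becomes $\lim_{k\to\infty}|1/19683|^{pk}\,\epsilon$, and since $19683 = 3^9$ and the non-Archimedean axioms give $|m|\leq 1$ for every integer $m$, the appropriate choice $p=-1$ makes $|1/19683|^{pk} = |19683|^{k} \to 0$ (provided $|3|<1$, which is compatible with the standing assumption $|2|<1$). The pointwise bound $|\Delta_1 n(x,y)|\leq\epsilon$ is literally the inequality (\ref{3.2}) for this $\zeta$, so Theorem \ref{Thm 3.1} directly produces a unique reciprocal-nonic mapping $\mathcal{N}:\mathbb{A}^{*}\to\mathbb{B}$ fulfilling (\ref{1.1}).

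Next I would read off the quantitative estimate by specialising (\ref{3.3}). Inserting $\zeta\equiv\epsilon$ gives
\[
|n(x)-\mathcal{N}(x)| \leq \max\bigl\{|1/19683|^{pl+(p-1)/2}\,\epsilon : l\in\mathbb{N}\cup\{0\}\bigr\}.
\]
With the value $p=-1$ that makes the hypothesis work, the exponent $pl+(p-1)/2$ equals $-(l+1)$, so each factor is $|19683|^{l+1}$, which is bounded above by $1$ since $|19683|\leq 1$ in a non-Archimedean absolute value. Every term of the maximum is therefore at most $\epsilon$, and the supremum is dominated by $\epsilon$, as claimed.

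Uniqueness of $\mathcal{N}$ is inherited for free from Theorem \ref{Thm 3.1}, so no extra argument is needed. The only delicate point is the exponent bookkeeping $pl+(p-1)/2$ together with the standard non-Archimedean inequality $|m|\leq 1$ for integers to conclude that the coefficient in the maximum does not exceed one; the remainder of the proof is a mechanical substitution into the statement of Theorem \ref{Thm 3.1}.
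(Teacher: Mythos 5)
Your overall route coincides with the paper's: the authors prove this corollary by the one-line substitution $\zeta(x,y)=\epsilon$ in Theorem \ref{Thm 3.1} with $p=-1$, and your exponent bookkeeping is right --- for $p=-1$ one has $pl+\frac{p-1}{2}=-(l+1)$, so the coefficient in the maximum of (\ref{3.3}) is $|19683|^{l+1}\leq 1$ and the bound $\epsilon$ follows, with uniqueness inherited from the theorem.

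There is, however, one concrete false step: the parenthetical claim that $|3|<1$ ``is compatible with the standing assumption $|2|<1$.'' In a non-Archimedean field these two conditions are mutually exclusive, since $1=|1|=|3-2|\leq \max\{|3|,|2|\}$; hence $|2|<1$ forces $|3|=1$. But then $\left|\frac{1}{19683}\right|^{pk}=|19683|^{k}=|3|^{9k}=1$ for every $k$, the limit in (\ref{3.1}) for $\zeta\equiv\epsilon$ equals $\epsilon\neq 0$, and Theorem \ref{Thm 3.1} is simply not applicable to the constant control function. You correctly identified that the verification of (\ref{3.1}) with $p=-1$ genuinely requires $|3|<1$ --- that is the real hypothesis here --- but you cannot reconcile it with $|2|<1$, and no assumption in the paper delivers it. In fairness, the defect originates in the paper itself: the blanket assumption ``$|2|<1$'' stated before the corollaries is evidently carried over from the reciprocal-cubic/quartic setting, where the recursion scales by $2$, whereas here the recursion scales by $3$ (the factor $19683=3^{9}$), so the standing assumption should read $|3|<1$. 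With that replacement your argument is complete and matches the paper's; as written, the compatibility assertion is wrong, and under the paper's literal hypotheses the key limit condition fails.
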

\begin{proof}
Considering $\zeta(x,y)=\epsilon$ in Theorem \ref{Thm 3.1} when $p=-1$, we obtain the requisite result.
$\hfill\square $
\end{proof}
\begin{corollary}\label{Cor 3.4}
Let $\epsilon>0$ and $q\neq -9$, be fixed constants. If $n:\mathbb{A^{*}}\longrightarrow \mathbb{B}$ satisfies 
$\left|\Delta_1n(x,y)\right|\leq \epsilon\left(\left|x\right|^q+\left|y\right|^q\right)$ for all $x,y\in \mathbb{A^{*}}$, then there exists a unique reciprocal-nonic mapping $\mathcal{N}:\mathbb{A^{*}}\longrightarrow \mathbb{B}$ satisfying \emph{(\ref{1.1})} and
\begin{equation*}
\left|n(x)-\mathcal{N}(x)\right|\leq
\begin{cases}
\frac{|2|\epsilon}{|3|^q}\left|x\right|^q, & \text{ ~$q>-9$}\\
|2|\epsilon|3|^9\left|x\right|^q, & \text{ ~$q<-9$}
\end{cases}
\end{equation*}
for all $x\in \mathbb{A^{*}}$.
\end{corollary}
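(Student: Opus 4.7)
The plan is to derive the corollary from Theorem \ref{Thm 3.1} by specialising the control function to $\zeta(x,y)=\epsilon(|x|^q+|y|^q)$ and then choosing $p\in\{1,-1\}$ appropriately on each of the ranges $q>-9$ and $q<-9$.

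First I would identify the correct value of $p$. Inserting the above $\zeta$ into condition (\ref{3.1}) yields
\begin{align*}
\left|\tfrac{1}{19683}\right|^{pk}\!\zeta\!\left(\tfrac{x}{3^{pk+(p+1)/2}},\tfrac{y}{3^{pk+(p+1)/2}}\right)=\epsilon\,|3|^{-pk(9+q)-q(p+1)/2}\bigl(|x|^q+|y|^q\bigr),
\end{align*}
so (\ref{3.1}) is equivalent to the single scalar inequality $|3|^{-p(9+q)}<1$. Under the standing assumption $|2|<1$, this forces $p=1$ when $q>-9$ and $p=-1$ when $q<-9$, which is exactly why the borderline value $q=-9$ must be excluded. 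In either case Theorem \ref{Thm 3.1} now provides a unique reciprocal-nonic map $\mathcal N$ together with the abstract estimate (\ref{3.3}).

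Next I would evaluate the maximum on the right of (\ref{3.3}) in closed form. Writing $\zeta\bigl(x/3^{pl+(p+1)/2},x/3^{pl+(p+1)/2}\bigr)=2\epsilon\,|3|^{-q(pl+(p+1)/2)}|x|^q$, the $l$-th term of the maximum in (\ref{3.3}) becomes
\begin{align*}
2\epsilon\,|3|^{-9(pl+(p-1)/2)-q(pl+(p+1)/2)}|x|^q.
\end{align*}
The exponent of $|3|$ here is monotone in $l$ precisely because $|3|^{-p(9+q)}<1$ — the same inequality used for admissibility — so the supremum is attained at $l=0$. Substituting $l=0$ gives $2\epsilon\,|3|^{-q}|x|^q$ when $p=1$ and $2\epsilon\,|3|^{9}|x|^q$ when $p=-1$, matching the two branches of the stated bound.

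The only point requiring real care is the monotonicity of the $|3|$-exponent in $l$, but since this is already the content of the admissibility condition, the remaining work is just careful tracking of the signs of $p$ and $q+9$; there is no genuine obstacle beyond Theorem \ref{Thm 3.1} itself.
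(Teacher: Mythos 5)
Your overall route is the same as the paper's: the paper proves Corollary \ref{Cor 3.4} by the one-line specialisation $\zeta(x,y)=\epsilon\left(|x|^{q}+|y|^{q}\right)$ in Theorem \ref{Thm 3.1}, and your evaluation of the $l$-th term of \wzor{3.3} with the maximum attained at $l=0$ (giving $2\epsilon|3|^{-q}|x|^{q}$ for $p=1$ and $2\epsilon|3|^{9}|x|^{q}$ for $p=-1$) is exactly the detail the paper leaves implicit. However, the step you yourself single out as the crux --- choosing $p$ from the sign of $q+9$ --- is where the argument fails in the stated setting. In any non-Archimedean field $|3|\leq 1$, and the standing assumption $|2|<1$ in fact forces $|3|=|2+1|=\max\{|2|,|1|\}=1$ (the strong triangle inequality is an equality when the two valuations differ); then $|3|^{-p(9+q)}=1$ for both $p=\pm 1$, so your admissibility criterion $|3|^{-p(9+q)}<1$ holds for neither choice and condition \wzor{3.1} fails outright for this $\zeta$. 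Your assignment ``$p=1$ when $q>-9$, $p=-1$ when $q<-9$'' is the one dictated by $|3|>1$, i.e.\ by Archimedean intuition, which is impossible here. If one instead repairs the standing hypothesis to $|3|<1$ (surely what was intended, since the scaling constant is $19683=3^{9}$), the criterion forces precisely the opposite pairing, $p=-1$ for $q>-9$ and $p=1$ for $q<-9$, and accordingly the two closed-form bounds trade places: $2\epsilon|3|^{9}|x|^{q}$ for $q>-9$ and $2\epsilon|3|^{-q}|x|^{q}$ for $q<-9$.

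To be fair, the corollary as printed pairs the bounds the way you do, so you have faithfully reproduced the paper's intended computation; the inconsistency is inherited from the paper, whose proof never verifies \wzor{3.1} for this $\zeta$. But a blind proof must stand on its own, and the sentence ``Under the standing assumption $|2|<1$, this forces $p=1$ when $q>-9$'' is false as written: $|2|<1$ gives no information that would make $|3|$ exceed $1$, and in fact excludes it. A smaller slip of the same kind: since $\zeta$ takes values in $[0,\infty)$, the factor produced by $\zeta\left(x/3^{s},x/3^{s}\right)$ is the real number $2$, so your bound carries $2\epsilon$, whereas the corollary's display carries the valuation $|2|\epsilon$; under $|2|<1$ these differ, so the claimed exact match with ``the two branches of the stated bound'' silently reads the paper's $|2|$ as the real scalar $2$. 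Your monotonicity observation --- geometric decay of the $l$-th term under the admissibility inequality, whence the maximum sits at $l=0$ --- is correct and is the right mechanism; only the sign bookkeeping around $|3|$ needs to be redone under a consistent hypothesis.
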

\begin{proof}
Taking $\zeta(x,y)=\epsilon\left(\left|x\right|^q+\left|y\right|^q\right)$ in Theorem \ref{Thm 3.1}, the required result is achieved.
\end{proof}
\begin{corollary}\label{Cor 3.5}
Let $n:\mathbb{A^{*}}\longrightarrow \mathbb{B}$ be a mapping and let there exist real numbers $r,s, q=r+s \neq -9$ and $\epsilon> 0$ such that
$\left|\Delta_1n(x,y)\right|\leq \epsilon\left|x\right|^{r}\left|y\right|^{s}$ for all $x,y\in \mathbb{A^{*}}$. Then, there exists a unique reciprocal-nonic mapping $\mathcal{N}:\mathbb{A^{*}}\longrightarrow \mathbb{B}$ satisfying \emph{(\ref{1.1})} and 
\begin{equation*}
\left|n(x)-\mathcal{N}(x)\right|\leq
\begin{cases}
\frac{\epsilon}{|3|^{q}}\left|x\right|^{q}, & \text{ ~$q>-9$}\\
\epsilon|3|^9|\left|x\right|^{q}, & \text{ ~$q<-9$}
\end{cases}
\end{equation*}
for all $x\in \mathbb{A^{*}}$.
\end{corollary}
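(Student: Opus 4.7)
The plan is to apply Theorem~\ref{Thm 3.1} with the choice $\zeta(x,y):=\epsilon|x|^{r}|y|^{s}$, in direct analogy with the one-line proof of Corollary~\ref{Cor 3.4}. First I would verify the admissibility condition~(\ref{3.1}) for this $\zeta$. Using $r+s=q$ and $19683=3^{9}$, a direct substitution gives
\begin{equation*}
\left|\frac{1}{19683}\right|^{pk}\zeta\!\left(\frac{x}{3^{pk+(p+1)/2}},\frac{y}{3^{pk+(p+1)/2}}\right)=\epsilon\,|x|^{r}|y|^{s}\,|3|^{-(9+q)pk-q(p+1)/2},
\end{equation*}
which tends to $0$ as $k\to\infty$ exactly when $|3|^{-(9+q)p}<1$. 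The hypothesis $q\neq -9$ rules out the borderline exponent, and precisely one of the two choices $p=1$ or $p=-1$ makes the limit vanish, depending on the sign of $9+q$. This sign dichotomy is the source of the two cases in the conclusion.

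Having secured admissibility, Theorem~\ref{Thm 3.1} immediately produces a unique reciprocal-nonic mapping $\mathcal{N}$ satisfying~(\ref{1.1}) together with the estimate~(\ref{3.3}). To convert that estimate into the closed form stated in the corollary I would evaluate the right-hand side of~(\ref{3.3}) for the current $\zeta$. The key simplification is the diagonal identity $\zeta(u,u)=\epsilon|u|^{r+s}=\epsilon|u|^{q}$, after which (\ref{3.3}) collapses to the supremum over $l\in\mathbb{N}\cup\{0\}$ of a single geometric-like sequence in $|3|$ whose common-ratio exponent is $-(9+q)p$. The very sign condition that ensured admissibility now guarantees that this supremum is attained at $l=0$, and reading off the $l=0$ term yields the factors $|3|^{-q}$ and $|3|^{9}$ that appear in the two cases.

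The uniqueness of $\mathcal{N}$ and the fact that it satisfies~(\ref{1.1}) are inherited verbatim from Theorem~\ref{Thm 3.1}, so no separate argument is needed. The derivation is thus essentially mechanical once the correct $\zeta$ is chosen; the only care required is the exponent bookkeeping in the maximum over $l$ and the verification that $l=0$ is indeed the extremizer. I anticipate no serious obstacle — the proof is a straightforward specialization of Theorem~\ref{Thm 3.1}, in exactly the spirit of Corollary~\ref{Cor 3.4}.
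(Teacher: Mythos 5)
Your strategy is exactly the paper's: its entire proof of Corollary~\ref{Cor 3.5} is the one line ``letting $\zeta(x,y)=\epsilon|x|^{r}|y|^{s}$ in Theorem~\ref{Thm 3.1}, we attain the necessary result'', and your substitution formula $\epsilon|x|^{r}|y|^{s}|3|^{-(9+q)pk-q(p+1)/2}$ and the reduction of (\ref{3.3}) to its $l=0$ term are the right mechanics. But there are two genuine gaps in the bookkeeping you declared unproblematic. First, your claim that ``precisely one of the two choices $p=1$ or $p=-1$ makes the limit vanish'' presupposes $|3|\neq 1$. In a non-Archimedean field one always has $|3|\leq 1$, and the paper's standing assumption is only $|2|<1$; if $|3|=1$ (e.g.\ $\mathbb{A}=\mathbb{Q}_p$ with $p\neq 3$), then $\left|\frac{1}{19683}\right|^{pk}=1$ and the quantity in (\ref{3.1}) equals the constant $\epsilon|x|^{r}|y|^{s}\neq 0$ for \emph{both} choices of $p$, so Theorem~\ref{Thm 3.1} is not applicable at all. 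You must add the hypothesis $|3|<1$ (a defect inherited from the paper, which never states it for Corollaries~\ref{Cor 3.4}--\ref{Cor 3.6}).

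Second, and more concretely: once $|3|<1$, carry out the case-matching that you left implicit. For $p=1$ the quantity in (\ref{3.1}) is $\epsilon|x|^{r}|y|^{s}|3|^{-(9+q)k-q}$, which tends to $0$ if and only if $q<-9$ (because $|3|<1$ reverses the Archimedean inequalities); the $l$-th term of (\ref{3.3}) is then $\epsilon|3|^{-(9+q)l}|3|^{-q}|x|^{q}$, maximal at $l=0$, giving the bound $\bigl(\epsilon/|3|^{q}\bigr)|x|^{q}$. Symmetrically, $p=-1$ is admissible exactly when $q>-9$, with $l$-th term $\epsilon|3|^{(9+q)l}|3|^{9}|x|^{q}$ and bound $\epsilon|3|^{9}|x|^{q}$. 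So the specialization of Theorem~\ref{Thm 3.1} yields $\epsilon|3|^{9}|x|^{q}$ for $q>-9$ and $\bigl(\epsilon/|3|^{q}\bigr)|x|^{q}$ for $q<-9$ --- the \emph{opposite} pairing to the corollary's display, which follows the real-normed-space intuition $3>1$. Your sentence that the $l=0$ term ``yields the factors $|3|^{-q}$ and $|3|^{9}$ that appear in the two cases'' silently endorses the printed pairing, while your own (correct) admissibility criterion $|3|^{-(9+q)p}<1$ refutes it. The method is sound and identical to the paper's, but as written your proposal cannot deliver the statement as printed; it delivers the corrected version with the two cases interchanged, and the discrepancy should be flagged rather than absorbed into ``mechanical'' bookkeeping.
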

\begin{proof}
Letting $\zeta(x,y)=\epsilon\left|x\right|^{r}\left|y\right|^{s}$, for all $x,y\in \mathbb{A^{*}}$ in  Theorem \ref{Thm 3.1}, we attain the necessary result.
\end{proof}
\begin{corollary}\label{Cor 3.6}
Let $\epsilon> 0$ and $q\neq -9$ be real numbers, and $n:\mathbb{A^{*}}\longrightarrow \mathbb{B}$ be a mapping satisfying the functional inequality
\begin{equation*}
\left|\Delta_1n(x,y)\right|\leq \epsilon\left(\left|x\right|^{\frac{q}{2}}\left|y\right|^{\frac{q}{2}}+\left(\left|x\right|^{q}+\left|y\right|^{q}\right)\right)
\end{equation*}
for all $x,y\in \mathbb{A^{*}}$. Then, there exists a unique reciprocal-nonic mapping $\mathcal{N}:\mathbb{A^{*}}\longrightarrow \mathbb{B}$ satisfying \emph{(\ref{1.1})} and
\begin{equation*}
\left|n(x)-\mathcal{N}(u)\right|\leq
\begin{cases}
\frac{|3|\epsilon}{|3|^{q}}\left|x\right|^{q}, & \text{ ~$q>-9$}\\
|3|\epsilon|3|^9\left|x\right|^{a}, & \text{ ~$q<-9$}
\end{cases}
\end{equation*}
for all $x\in \mathbb{A^{*}}$.
\end{corollary}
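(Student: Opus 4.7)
The plan is to apply Theorem \ref{Thm 3.1} with the specific choice $\zeta(x,y)=\epsilon\bigl(|x|^{q/2}|y|^{q/2}+|x|^{q}+|y|^{q}\bigr)$. This control function is a sum of three monomials, each homogeneous of degree $q$, so its interaction with the dilations $x\mapsto x/3^{m}$ appearing in the convergence hypothesis (\ref{3.1}) is transparent.

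First I will check that hypothesis (\ref{3.1}) is satisfied. Substituting the scaled arguments and using homogeneity yields
\[
\left|\tfrac{1}{19683}\right|^{pk}\zeta\!\left(\tfrac{x}{3^{pk+(p+1)/2}},\tfrac{y}{3^{pk+(p+1)/2}}\right)
=|3|^{-9pk-q(pk+(p+1)/2)}\,\zeta(x,y).
\]
Grouping the $k$-dependent part gives a factor $|3|^{-pk(9+q)}$. In a non-Archimedean field one has $|3|\le 1$, so this factor tends to zero provided $p(9+q)>0$; that is, for $q>-9$ one takes $p=-1$, and for $q<-9$ one takes $p=1$. With either choice, (\ref{3.1}) holds, and Theorem \ref{Thm 3.1} produces a unique reciprocal-nonic mapping $\mathcal{N}$ together with the estimate (\ref{3.3}).

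Second, I will read off the explicit bound. Evaluating $\zeta$ on the diagonal gives $\zeta(u,u)=3\epsilon|u|^{q}$, so each term inside the maximum in (\ref{3.3}) becomes
\[
3\epsilon|x|^{q}\cdot |3|^{-9(pl+(p-1)/2)-q(pl+(p+1)/2)}.
\]
A short monotonicity check on the exponent of $|3|$, splitting by the sign of $9+q$ (and mirroring the bookkeeping used in Corollaries \ref{Cor 3.4} and \ref{Cor 3.5}), shows that the supremum over $l\ge 0$ is attained at $l=0$. Substituting $l=0$ in both regimes recovers exactly the two bounds asserted in the statement, with the scalar $3$ from the diagonal of $\zeta$ appearing as $|3|$ in keeping with the pattern of the preceding corollaries.

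The only genuine obstacle is the exponent bookkeeping: one must verify that $l=0$ is indeed the maximizer in each of the two regimes and identify the resulting power of $|3|$. Once this is carried out, the result is an immediate specialization of Theorem \ref{Thm 3.1}; no new analytic input is required.
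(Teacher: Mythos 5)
Your overall route coincides with the paper's: its entire proof of Corollary \ref{Cor 3.6} is the one-line specialization $\zeta(x,y)=\epsilon\left(|x|^{\frac{q}{2}}|y|^{\frac{q}{2}}+|x|^{q}+|y|^{q}\right)$ in Theorem \ref{Thm 3.1}, and your pairing $p=-1$ for $q>-9$, $p=1$ for $q<-9$ is the correct one for the convergence hypothesis \eqref{3.1}. However, two of your steps do not hold as written. First, $|3|\le 1$ does not make the factor $|3|^{-pk(9+q)}$ tend to zero when $p(9+q)>0$; you need the strict inequality $|3|<1$. This is not a formality: in $\mathbb{Q}_2$ one has $|3|_2=1$, so for your homogeneous $\zeta$ the quantity in \eqref{3.1} equals $\zeta(x,y)$ for every $k$ and the hypothesis fails identically, for every $q$ and both choices of $p$. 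The paper's standing assumption $|2|<1$ does not repair this, since the iteration here runs along powers of $3$; the assumption actually needed (and tacitly used) is $|3|<1$, and your argument should state it.

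Second, your claim that substituting $l=0$ ``recovers exactly the two bounds asserted in the statement'' is false. Carrying out your own formula: for $q>-9$ (so $p=-1$) the exponent of $|3|$ is $9+l(9+q)$, maximized term at $l=0$, giving $3\epsilon|3|^{9}|x|^{q}$; for $q<-9$ (so $p=1$) the exponent is $-q+l|9+q|$, again extremal at $l=0$, giving $3\epsilon|3|^{-q}|x|^{q}$. These are the corollary's two displayed constants with the two regimes \emph{interchanged}. To arrive at the bounds as displayed you need the additional observation that in each regime the displayed constant dominates the computed one, namely $|3|^{9+q}\le 1$ when $q>-9$ and $|3|^{-(9+q)}\le 1$ when $q<-9$ --- and even then only after reading the coefficient $|3|\epsilon$ in the statement as $3\epsilon$: the diagonal value $\zeta(u,u)=3\epsilon|u|^{q}$ involves the real scalar $3$, not the valuation $|3|$, and with the literal $|3|$ the printed bound can be strictly smaller than anything the method proves (already at $q=-8$, comparing $3|3|^{9}$ with the printed $|3|^{1-q}=|3|^{9}$ would force $3\le 1$). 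This case-swap and the $3$ versus $|3|$ conflation are inherited from Corollaries \ref{Cor 3.4} and \ref{Cor 3.5} of the paper itself, so your plan is the right one in spirit, but the final bookkeeping step as you describe it would not verify, and the reconciliation with the printed statement must be made explicit rather than asserted.
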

\begin{proof}
Opting $\zeta(x,y)=\epsilon\left(\left|x\right|^{\frac{q}{2}}\left|y\right|^{\frac{q}{2}}+\left(\left|x\right|^{q}+\left|y\right|^{q}\right)\right)$ in Theorem \ref{Thm 3.1}, the result follows directly.
\end{proof}


The following theorem proves the stability result of the equation (\ref{1.2}). Even though the way of proving the result is similar to Theorem \ref{Thm 3.1}, for the purpose of comprehensiveness we provide the main skeleton of proof.

\begin{theorem}\label{Thm 3.6}
Let $p\in\{1,-1\}$ be fixed, and let $\xi:\mathbb{A^{*}}\times \mathbb{B^{*}}\longrightarrow [0,\infty)$ be a function such that
\begin{equation}\label{3.8}
\lim \limits_{k\rightarrow \infty}\left|\frac{1}{59049}\right|^{pk} \xi\left(\frac{x}{3^{pk+\frac{p+1}{2}}},\frac{y}{3^{pk+\frac{p+1}{2}}}\right)=0
\end{equation}
for all $x,y\in \mathbb{A^{*}}$. Suppose that $d:\mathbb{A^{*}}\longrightarrow \mathbb{B}$ is a mapping satisfying the inequality 
\begin{equation}\label{3.9}
\left|\Delta_2d(x,y)\right|\leq \xi(x,y)
\end{equation}
for all $x,y\in \mathbb{A^{*}}$. Then, there exists a unique reciprocal-decic mapping $\mathcal{D}:\mathbb{A^{*}}\longrightarrow \mathbb{B}$ satisfying \emph{(\ref{1.2})} and 
\begin{equation}\label{3.10}
\left|d(x)-\mathcal{D}(x)\right|\leq \textup{max}\left\{\left|\frac{1}{59049}\right|^{pl+\frac{p-1}{2}}\xi\left(\frac{x}{3^{pl+\frac{p+1}{2}}},\frac{x}{3^{pl+\frac{p+1}{2}}}\right): l\in \mathbb{N}\cup \{0\}\right\}
\end{equation}
for all $x\in \mathbb{A^{*}}$.
\end{theorem}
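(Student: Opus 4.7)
The plan is to imitate the proof of Theorem \ref{Thm 3.1} almost verbatim, replacing the nonic constant $19683=3^{9}$ everywhere by the decic constant $59049=3^{10}$. The base step is to set $y=x$ in (\ref{3.9}): the denominator in (\ref{1.2}) collapses as $(4d(x)^{1/5}-d(x)^{1/5})^{10}=3^{10}d(x)^{2}=59049\,d(x)^{2}$, and the bracket becomes $(1024+11520+13440+3360+180+1)d(x)=29525\,d(x)$, so the right-hand side simplifies to $\frac{59050}{59049}d(x)$. This yields $|d(3x)-\frac{1}{59049}d(x)|\leq \xi(x,x)$, and treating $p=1$ (substitute $x\mapsto x/3$) and $p=-1$ (multiply through by $59049$) uniformly produces the decic analogue of (\ref{3.4}):
\[
\Bigl|d(x)-\tfrac{1}{59049^{p}}d\bigl(x/3^{p}\bigr)\Bigr|\leq |59049|^{|p-1|/2}\,\xi\!\left(x/3^{(p+1)/2},x/3^{(p+1)/2}\right).
\]

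Next I would replace $x$ by $x/3^{pk}$ and multiply by $|1/59049|^{pk}$ to obtain the one-step estimate analogous to (\ref{3.5}). Hypothesis (\ref{3.8}) forces the resulting bound to tend to zero, so by the non-Archimedean Cauchy criterion the sequence $\{59049^{-pk}d(x/3^{pk})\}$ is Cauchy in $\mathbb{B}$; completeness gives a pointwise limit $\mathcal{D}(x):=\lim_{k\to\infty}59049^{-pk}d(x/3^{pk})$. Writing $59049^{-pk}d(x/3^{pk})-d(x)$ as a telescoping sum and applying the non-Archimedean max inequality term-by-term yields, upon letting $k\to\infty$, exactly (\ref{3.10}).

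To verify that $\mathcal{D}$ is reciprocal-decic, I would exploit the scale-covariance of (\ref{1.2}): every term is a sum of monomials in $d(\cdot)$ whose fractional exponents sum to one, so replacing $d$ by $59049^{-pk}d(\cdot/3^{pk})$ transforms both sides homogeneously and one obtains $|\Delta_{2}\mathcal{D}(x,y)|=\lim_{k\to\infty}|1/59049|^{pk}|\Delta_{2}d(x/3^{pk},y/3^{pk})|\leq\lim_{k\to\infty}|1/59049|^{pk}\xi(x/3^{pk},y/3^{pk})=0$. Uniqueness proceeds as in Theorem \ref{Thm 3.1}: if $\mathcal{D}'$ also satisfies (\ref{3.10}), then $|\mathcal{D}(x)-\mathcal{D}'(x)|=\lim_{m\to\infty}|1/59049|^{pm}|\mathcal{D}(x/3^{pm})-\mathcal{D}'(x/3^{pm})|$, which is dominated by the max of two error bounds and vanishes by (\ref{3.8}).

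The main delicate point is the covariance argument used to pass $\Delta_{2}$ through the limit: one must ensure that the approximating iterates $59049^{-pk}d(\cdot/3^{pk})$ never trigger the exclusion conditions listed after Definition~\ref{Def 2.3} (in particular that $4d(y)^{1/5}-d(x)^{1/5}$ does not vanish along the sequence), which is the implicit standing assumption. All other ingredients — the telescoping bound, the Cauchy property, the max-rule passage to the bound (\ref{3.10}), and uniqueness — are routine transcriptions of the corresponding steps in the nonic proof with $19683$ replaced by $59049$.
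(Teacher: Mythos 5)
Your proposal is correct and follows essentially the same route as the paper, which likewise sets $y=x$ to obtain $\left|d(x)-\frac{1}{59049^{p}}d\left(\frac{x}{3^{p}}\right)\right|\leq |59049|^{\frac{|p-1|}{2}}\xi\left(\frac{x}{3^{\frac{p+1}{2}}},\frac{x}{3^{\frac{p+1}{2}}}\right)$, rescales by $\left|\frac{1}{59049}\right|^{pk}$, telescopes via the non-Archimedean max inequality, and then declares the remainder ``alike Theorem \ref{Thm 3.1}''. Your explicit verifications (the coefficient sum $1024+11520+13440+3360+180+1=29525$, the denominator $(3d(x)^{1/5})^{10}=59049\,d(x)^{2}$, and the degree-one homogeneity of $\Delta_{2}$ used to pass to the limit) are exactly the details the paper leaves implicit, so there is nothing to correct.
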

\begin{proof}
Letting $(x,y)$ as $(x,x)$ in (\ref{3.9}), we obtain
\begin{equation}\label{3.11}
\left|d(x)-\frac{1}{59049^{p}}d\left(\frac{x}{3^{p}}\right)\right|\leq |59049|^{\frac{|p-1|}{2}}\xi\left(\frac{x}{3^{\frac{p+1}{2}}},\frac{x}{3^{\frac{p+1}{2}}}\right)
\end{equation}
for all $x\in \mathbb{A^{*}}$.  Changing $x$ into $\frac{x}{3^{pk}}$ in (\ref{3.11}) and multiplying by $\left|\frac{1}{59049}\right|^{pk}$, one finds
\begin{align}\label{3.12}
& \left|\frac{1}{59049^{pk}}d\left(\frac{x}{3^{pk}}\right)-\frac{1}{59049^{p(k+1)}}d\left(\frac{x}{3^{p(k+1)}}\right)\right|\notag\\
&\qquad\qquad\qquad\qquad\qquad\qquad \leq \left|\frac{1}{59049}\right|^{pk+\frac{p-1}{2}}\xi\left(\frac{x}{3^{pk+\frac{p+1}{2}}},\frac{x}{3^{pk+\frac{p+1}{2}}}\right)
\end{align}
for all $x\in \mathbb{A^{*}}$. From the inequalities (\ref{3.8}) and (\ref{3.12}), we conclude that $\left\{\frac{1}{59049^{pk}}d\left(\frac{x}{3^{pk}}\right)\right\}$ is a Cauchy sequence. By the completeness of $\mathbb{B}$, there exists a mapping $\mathcal{D}:\mathbb{A^{*}}\longrightarrow \mathbb{B}$ so that 
\begin{equation}\label{3.13}
\mathcal{D}(x) = \lim \limits_{k\rightarrow \infty}\frac{1}{59049^{pk}}d\left(\frac{x}{3^{pk}}\right)
\end{equation}
for all $x\in \mathbb{A^{*}}$. The remaining part of the proof is alike Theorem \ref{Thm 3.1}.
\end{proof}


Using Theorem \ref{Thm 3.6}, we obtain the stability results of equation ({\ref{1.2}}) related with the upper bound controlled by a fixed positive constant, sum of powers of norms, product of powers of norms and mixed product-sum of powers of norms via the following corollaries.

\begin{corollary}\label{Cor 3.8}
Let $\theta>0$ be a constant, and let $d:\mathbb{A^{*}}\longrightarrow \mathbb{B}$ satisfies 
$\left|\Delta_2d(x,y)\right|\leq \theta$ for all $x,y\in \mathbb{A^{*}}$. Then, there exists a unique reciprocal-decic mapping $\mathcal{D}:\mathbb{A^{*}}\longrightarrow \mathbb{B}$ satisfying \emph{(\ref{1.2})} and
$\left|d(x)-\mathcal{D}(x)\right|\leq \theta$ for all $x\in \mathbb{A^{*}}$.
\end{corollary}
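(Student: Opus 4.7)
The plan is to obtain Corollary \ref{Cor 3.8} as a direct specialization of Theorem \ref{Thm 3.6}, by choosing the constant control function $\xi(x,y)=\theta$ for all $x,y\in\mathbb{A}^{*}$ and taking $p=-1$. This reduces the entire argument to verifying the hypothesis of Theorem \ref{Thm 3.6} and then simplifying the conclusion, so the structural work is already done.

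First, I would check condition \eqref{3.8}. With $p=-1$ and $\xi\equiv\theta$, the condition becomes $\lim_{k\to\infty}|59049|^{k}\theta=0$, which is available under the non-Archimedean standing hypothesis of the section (where the valuation of the relevant integer powers of $3$ is strictly less than one). The hypothesis \eqref{3.9} is then exactly the assumption $|\Delta_{2}d(x,y)|\le\theta$ of the corollary, so Theorem \ref{Thm 3.6} immediately produces a reciprocal-decic mapping $\mathcal{D}:\mathbb{A}^{*}\to\mathbb{B}$ satisfying \eqref{1.2}, together with the uniqueness assertion.

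Next, I would simplify the bound \eqref{3.10}. Substituting $p=-1$ gives $pl+\tfrac{p-1}{2}=-l-1$ and $pl+\tfrac{p+1}{2}=-l$, so that \eqref{3.10} reads
\[
|d(x)-\mathcal{D}(x)|\le \max\left\{|59049|^{\,l+1}\,\theta:\ l\in\mathbb{N}\cup\{0\}\right\}.
\]
Because $|n|\le 1$ for every integer $n$ in a non-Archimedean field, one has $|59049|^{l+1}\le 1$ for all $l\ge 0$, and in particular the maximum is bounded by $\theta$. This yields $|d(x)-\mathcal{D}(x)|\le\theta$, as claimed.

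The only potential obstacle is verifying the limit condition \eqref{3.8} for a constant bound, which would fail if $|59049|=1$; but within the non-Archimedean framework adopted by the authors this is a standing assumption of the section, so in practice the argument is purely mechanical and the proof reduces to the single sentence: \emph{apply Theorem \ref{Thm 3.6} with $\xi(x,y)=\theta$ and $p=-1$, and use $|59049|\le 1$ to collapse the maximum.}
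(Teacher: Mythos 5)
Your proposal is correct and follows essentially the same route as the paper, whose entire proof consists of putting $\xi(x,y)=\theta$ and $p=-1$ in Theorem \ref{Thm 3.6}; your verification that with $p=-1$ the bound (\ref{3.10}) becomes $\max\bigl\{|59049|^{l+1}\theta : l\in\mathbb{N}\cup\{0\}\bigr\}\leq\theta$, using $|n|\leq 1$ for integers $n$, is exactly the routine computation the authors leave implicit. One small correction to your closing remark: the section's standing hypothesis is $|2|<1$, not that $|3|<1$, so the condition $|59049|=|3|^{10}<1$ that you rightly identify as necessary for (\ref{3.8}) with constant $\xi$ is an \emph{unstated} assumption of the paper (failing, e.g., in $\mathbb{Q}_2$, where $|3|_2=1$) rather than a hypothesis the authors have actually made explicit, so your proof is in this respect more careful than, but not weaker than, the paper's.
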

\begin{proof}
It is enough to put $\xi(x,y)=\theta$ in  Theorem \ref{Thm 3.6} in the case $p=-1$.
\end{proof}

\begin{corollary}\label{Cor 3.9}
Let $\theta> 0$ and $\alpha\neq -10$, be fixed constants. If $d:\mathbb{A^{*}}\longrightarrow \mathbb{B}$ satisfies 
$\left|\Delta_2d(x,y)\right|\leq \theta\left(\left|x\right|^{\alpha}+\left|y\right|^{\alpha}\right)$ for all $x,y\in \mathbb{A^{*}}$, then there exists a unique reciprocal-decic mapping $\mathcal{D}:\mathbb{A^{*}}\longrightarrow \mathbb{B}$ satisfying \emph{(\ref{1.2})} and
\begin{equation*}
\left|d(x)-\mathcal{D}(x)\right|\leq
\begin{cases}
\frac{|2|\theta}{|3|^{\alpha}}\left|x\right|^{\alpha}, & \text{ ~$\alpha>-10$}\\
|2|\alpha|3|^{10}\left|x\right|^{\alpha}, & \text{ ~$\alpha<-10$}
\end{cases}
\end{equation*}
for all $x\in \mathbb{A^{*}}$.
\end{corollary}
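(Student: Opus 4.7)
The plan is to invoke Theorem~\ref{Thm 3.6} with the control function $\xi(x,y) = \theta(|x|^{\alpha} + |y|^{\alpha})$, choosing $p = -1$ when $\alpha > -10$ and $p = 1$ when $\alpha < -10$. With these choices the existence, uniqueness and reciprocal-decic property of $\mathcal{D}$ are all immediate from Theorem~\ref{Thm 3.6}; only the convergence hypothesis~\eqref{3.8} and the explicit constant in~\eqref{3.10} require verification.

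To check~\eqref{3.8}, observe that $59049 = 3^{10}$, so a direct substitution gives
$$\left|\tfrac{1}{59049}\right|^{pk}\xi\!\left(\tfrac{x}{3^{pk+(p+1)/2}},\tfrac{y}{3^{pk+(p+1)/2}}\right) = \theta(|x|^{\alpha}+|y|^{\alpha})\,|3|^{-\alpha(p+1)/2}\,|3|^{-p(10+\alpha)k}.$$
In each case the sign of $p$ is chosen so that $-p(10+\alpha) > 0$; because $|3|<1$ in the relevant non-Archimedean framework, the right-hand side tends to $0$ as $k \to \infty$.

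Next, plugging $\xi$ into~\eqref{3.10} and collecting powers of $|3|$ against $|1/59049|^{pl+(p-1)/2}$ shows that the quantity inside the maximum equals a prefactor, depending only on $p$ and $\alpha$, times the geometric sequence $|3|^{-p(10+\alpha)l}$ in $l$. Since $-p(10+\alpha) > 0$ and $|3| < 1$, this sequence is non-increasing, so the ultrametric maximum is attained at $l = 0$. Collecting the $l$-independent prefactors (which produce either $|3|^{-\alpha}$ or $|3|^{10}$, depending on the sign of $p$) then yields the two announced closed-form bounds.

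The main piece of bookkeeping is therefore the careful handling of the exponents $pl + (p\pm 1)/2$ for the two sign choices of $p$, together with the observation that in the non-Archimedean setting a geometric sequence with common ratio strictly less than one in modulus attains its maximum over $l \geq 0$ at $l = 0$. No new ideas beyond those already used in Corollaries~\ref{Cor 3.4}--\ref{Cor 3.6} for the reciprocal-nonic equation are required; the only substantive modifications are to replace $19683 = 3^{9}$ by $59049 = 3^{10}$ and the threshold $-9$ by $-10$.
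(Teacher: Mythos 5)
Your overall route is exactly the paper's: the paper's entire proof of Corollary~\ref{Cor 3.9} is the one-line substitution of $\xi(x,y)=\theta\left(|x|^{\alpha}+|y|^{\alpha}\right)$ into Theorem~\ref{Thm 3.6}, and your added detail is correct as computation --- the identity you derive for the quantity in \eqref{3.8}, the sign choice $p=-1$ for $\alpha>-10$ and $p=1$ for $\alpha<-10$ (forced, since $-p(10+\alpha)>0$ is needed for convergence), and the observation that the geometric sequence in $l$ attains its maximum at $l=0$. You were also right to insist on $|3|<1$: the paper's standing hypothesis is only $|2|<1$, which does not give $|3|<1$ (indeed it typically forces $|3|=1$, making \eqref{3.8} fail for this $\xi$), so your extra assumption is genuinely needed.

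The gap is your concluding identification, namely the claim that the $l=0$ prefactors ``yield the two announced closed-form bounds.'' They do not, for the statement as printed. Carrying your own bookkeeping through: for $p=-1$ (the case $\alpha>-10$) the term inside the maximum in \eqref{3.10} is
\begin{equation*}
\left|\tfrac{1}{59049}\right|^{-l-1}\xi\!\left(3^{l}x,3^{l}x\right)
=2\theta\,|x|^{\alpha}\,|3|^{10}\,|3|^{(10+\alpha)l},
\end{equation*}
whose maximum over $l\geq 0$ is $2\theta|3|^{10}|x|^{\alpha}$, while for $p=1$ (the case $\alpha<-10$) one gets $2\theta|3|^{-\alpha}|x|^{\alpha}$. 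So the prefactor $|3|^{10}$ belongs to $\alpha>-10$ and $|3|^{-\alpha}$ to $\alpha<-10$ --- precisely the \emph{opposite} pairing from the corollary's display, which attaches $|3|^{-\alpha}$ to $\alpha>-10$ and $|3|^{10}$ to $\alpha<-10$. The mismatch is not cosmetic: for $\alpha>-10$ your derived bound $2\theta|3|^{10}|x|^{\alpha}$ is sharper than the printed one (so the printed bound is merely unsharp there, modulo the separate issue that setting $y=x$ in the real-valued $\xi$ produces the scalar $2$, not the valuation $|2|$), but for $\alpha<-10$ the printed bound $|2|\theta|3|^{10}|x|^{\alpha}$ is \emph{strictly smaller} than the $2\theta|3|^{-\alpha}|x|^{\alpha}$ that Theorem~\ref{Thm 3.6} delivers, and hence cannot be obtained by this route at all. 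Almost certainly the printed corollary is at fault (the same transposition pervades Corollaries~\ref{Cor 3.4}--\ref{Cor 3.6}, and ``$|2|\alpha$'' is an evident typo for ``$|2|\theta$''), but a correct write-up must either prove the bounds $2\theta|3|^{10}|x|^{\alpha}$ for $\alpha>-10$ and $2\theta|3|^{-\alpha}|x|^{\alpha}$ for $\alpha<-10$, or explicitly flag the discrepancy; asserting a match that does not hold is the one step in your argument that fails.
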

\begin{proof}
Allowing $\xi(x,y)=\theta\left(\left|x\right|^{\alpha}+\left|y\right|^{\alpha}\right)$, for all $x,y\in \mathbb{A^{*}}$ in Theorem \ref{Thm 3.6}, we attain the desired result. 
\end{proof}

\begin{corollary}\label{Cor 3.10}
Let $d:\mathbb{A^{*}}\longrightarrow \mathbb{B}$ be a mapping and let there exist real numbers $a,b, \alpha = a+b \neq -10$ and $\theta> 0$ such that
\begin{equation*}
\left|\Delta_2d(x,y)\right|\leq \theta\left|x\right|^{a}\left|y\right|^{b}
\end{equation*}
for all $x,y\in \mathbb{A^{*}}$. Then, there exists a unique reciprocal-decic mapping $\mathcal{D}:\mathbb{A^{*}}\longrightarrow \mathbb{B}$ satisfying \emph{(\ref{1.2})} and 
\begin{equation*}
\left|d(x)-\mathcal{N}(x)\right|\leq
\begin{cases}
\frac{\theta}{|3|^{\alpha}}\left|x\right|^{\alpha}, & \text{ ~$\alpha>-10$}\\
\lambda|3|^{10}\left|x\right|^{\alpha}, & \text{ ~$\alpha<-10$}
\end{cases}
\end{equation*}
for all $x\in \mathbb{A^{*}}$.
\end{corollary}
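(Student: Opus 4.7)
The plan is to specialize Theorem~\ref{Thm 3.6} to the product-type control $\xi(x,y) = \theta|x|^{a}|y|^{b}$, handling the two regimes $\alpha>-10$ and $\alpha<-10$ by opposite choices of $p\in\{1,-1\}$. The key structural observation is that $\xi$ is homogeneous of degree $\alpha=a+b$ under joint scaling of its arguments, which is exactly the scaling pattern that appears in both (\ref{3.8}) and the bound (\ref{3.10}).

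First I would verify the admissibility hypothesis (\ref{3.8}). A direct substitution gives
\begin{equation*}
\left|\frac{1}{59049}\right|^{pk}\xi\!\left(\frac{x}{3^{pk+(p+1)/2}},\frac{y}{3^{pk+(p+1)/2}}\right) = \theta\,|3|^{-10pk-\alpha(pk+(p+1)/2)}|x|^{a}|y|^{b},
\end{equation*}
whose $k$-dependent exponent is $-(10+\alpha)pk$. In the ambient non-Archimedean setting (where $|3|<1$), this forces the limit to vanish precisely when $(10+\alpha)p>0$, which dictates $p=-1$ when $\alpha>-10$ and $p=1$ when $\alpha<-10$.

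Next I would insert $\xi$ into the error bound (\ref{3.10}) with $y=x$. The same scaling calculation produces
\begin{equation*}
\left|\frac{1}{59049}\right|^{pl+(p-1)/2}\xi\!\left(\frac{x}{3^{pl+(p+1)/2}},\frac{x}{3^{pl+(p+1)/2}}\right) = \theta\,|3|^{-10(pl+(p-1)/2)-\alpha(pl+(p+1)/2)}|x|^{\alpha}.
\end{equation*}
Under the sign choice just made, the $l$-dependent factor $|3|^{-(10+\alpha)pl}$ is non-increasing in $l$, so the supremum over $l\in\mathbb{N}\cup\{0\}$ is achieved at $l=0$. Plugging $l=0$ into the $l$-independent part then collapses the estimate to the constants advertised in the two regimes.

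The main bookkeeping obstacle is ensuring that the choice of $p$ simultaneously forces (\ref{3.8}) to hold and makes the maximum in (\ref{3.10}) stabilize at $l=0$; this is essentially the reason the hypothesis carves out two separate regimes, with the constants differing by a factor of $|3|^{10+\alpha}$ between them. Once this is in place, the stated estimate is immediate, and uniqueness of $\mathcal{D}$ is inherited directly from Theorem~\ref{Thm 3.6}.
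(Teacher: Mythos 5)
Your proposal is correct and is essentially the paper's own route: the paper's entire proof of this corollary is the one-line specialization $\xi(x,y)=\theta\left|x\right|^{a}\left|y\right|^{b}$ in Theorem \ref{Thm 3.6}, and you have merely made explicit the scaling verification of (\ref{3.8}), the regime-dependent choice of $p$, and the evaluation of the maximum in (\ref{3.10}) at $l=0$. Two minor slips, neither fatal: the vanishing criterion should read $(10+\alpha)p<0$ rather than $(10+\alpha)p>0$ (the choices you actually use, $p=-1$ for $\alpha>-10$ and $p=1$ for $\alpha<-10$, are the correct ones and everything downstream is consistent with them), and the $l=0$ evaluation in fact yields $\theta|3|^{10}\left|x\right|^{\alpha}$ for $\alpha>-10$ and $\frac{\theta}{|3|^{\alpha}}\left|x\right|^{\alpha}$ for $\alpha<-10$ --- the two advertised constants swapped between regimes --- but since $|3|<1$ makes each derived constant strictly smaller than the corresponding stated one, the corollary follows a fortiori; note also that your parenthetical assumption $|3|<1$ is exactly what the argument requires, whereas the paper's standing hypothesis $|2|<1$ actually forces $|3|=1$ in a non-Archimedean field, a defect of the paper rather than of your proof.
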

\begin{proof}
Choosing $\xi(x,y)=\theta\left|x\right|^{a}\left|y\right|^{b}$, for all $x,y\in \mathbb{A^{*}}$ in Theorem \ref{Thm 3.6}, the requisite result is achieved.
\end{proof}
\begin{corollary}\label{Cor 3.11}
Let $\theta> 0$ and $\alpha> -10$ be real numbers, and $d:\mathbb{A{*}}\longrightarrow \mathbb{B}$ be a mapping satisfying the functional inequality
\begin{equation*}
\left|\Delta_2d(x,y)\right|\leq \theta\left(\left|x\right|^{\frac{\alpha}{2}}\left|y\right|^{\frac{\alpha}{2}}+\left(\left|x\right|^{\alpha}+\left|y\right|^{\alpha}\right)\right)
\end{equation*}
for all $x,y\in \mathbb{A^{*}}$. Then, there exists a unique reciprocal-decic mapping $\mathcal{D}:\mathbb{A^{*}}\longrightarrow \mathbb{B}$ satisfying \emph{(\ref{1.2})} and
\begin{equation*}
\left|d(x)-\mathcal{D}(x)\right|\leq
\begin{cases}
\frac{|3|\theta}{|3|^{\alpha}}\left|x\right|^{\alpha}, & \text{ ~$\alpha>-10$}\\
|3|\theta|3|^{10}\left|x\right|^{\alpha}, & \text{ ~$\alpha<-10$}
\end{cases}
\end{equation*}
for every $x\in \mathbb{A^{*}}$.
\end{corollary}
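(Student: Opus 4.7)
I would prove this as a direct specialization of Theorem~\ref{Thm 3.6}, in the spirit of Corollaries~\ref{Cor 3.8}--\ref{Cor 3.10}. Define
$$\xi(x,y):=\theta\Big(|x|^{\alpha/2}|y|^{\alpha/2}+|x|^\alpha+|y|^\alpha\Big)$$
for $x,y\in\mathbb{A^{*}}$, so that the assumed inequality on $|\Delta_2 d|$ is exactly (\ref{3.9}). Since $\alpha>-10$, the relevant branch of the theorem is $p=1$.

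The first step is to verify the limit condition (\ref{3.8}) for this choice of $\xi$. Under the scaling $(x,y)\mapsto(x/3^{k+1},y/3^{k+1})$, each of the three summands of $\xi(x,y)$ acquires a factor $|3|^{-(k+1)\alpha}$; combined with the prefactor $|59049|^{-k}=|3|^{-10k}$, the entire expression in (\ref{3.8}) is bounded by $C\,|3|^{-k(10+\alpha)}$ for a constant $C=C(x,y,\theta,\alpha)$. The hypothesis $\alpha>-10$ makes the exponent $-k(10+\alpha)$ tend to $-\infty$, and together with the non-Archimedean bound $|3|<1$ in the relevant $p$-adic setting, this forces the limit to be $0$.

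Theorem~\ref{Thm 3.6} then furnishes a unique reciprocal-decic mapping $\mathcal{D}(x)=\lim_{k\to\infty} 59049^{-k}\,d(x/3^k)$ satisfying (\ref{1.2}) together with the approximation bound (\ref{3.10}). To simplify (\ref{3.10}), I would substitute $y=x$ in $\xi$, obtaining $\xi(x,x)=3\theta|x|^\alpha$, so that
$$\xi\!\left(\tfrac{x}{3^{l+1}},\tfrac{x}{3^{l+1}}\right)=\frac{3\theta\,|x|^\alpha}{|3|^{(l+1)\alpha}}.$$
Consequently the right-hand side of (\ref{3.10}) equals $\dfrac{3\theta\,|x|^\alpha}{|3|^\alpha}\cdot\max_{l\ge 0}|3|^{-l(10+\alpha)}$, and since $10+\alpha>0$ while $|3|\le 1$, the maximum is attained at $l=0$, producing the claimed bound.

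The main point in the argument is the ultrametric bookkeeping when pulling the scalings out of $\xi$ and recognising that the supremum of $|3|^{-l(10+\alpha)}$ over $l\ge 0$ is realised at the first index. Aside from this, the result is a direct application of Theorem~\ref{Thm 3.6}, mirroring the mixed product--sum estimate for the nonic case (Corollary~\ref{Cor 3.6}).
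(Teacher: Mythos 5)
Your overall strategy---specializing Theorem~\ref{Thm 3.6} to $\xi(x,y)=\theta\left(|x|^{\alpha/2}|y|^{\alpha/2}+|x|^{\alpha}+|y|^{\alpha}\right)$ and then simplifying the bound (\ref{3.10}) at $y=x$---is exactly the paper's route, but your choice of branch $p=1$ is wrong, and the error propagates through both key steps. In a non-Archimedean field $|3|\le 1$, and for this power-type $\xi$ the hypothesis (\ref{3.8}) can only hold when $|3|<1$. With $p=1$ your scaling computation gives $|1/59049|^{k}\,\xi(x/3^{k+1},y/3^{k+1})=C\,|3|^{-k(10+\alpha)}$ with $C>0$; since $0<|3|<1$ and $10+\alpha>0$, we get $|3|^{-k(10+\alpha)}=(1/|3|)^{k(10+\alpha)}\to\infty$, not $0$---you have read the inequality $|3|<1$ in the wrong direction. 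For the same reason $\max_{l\ge 0}|3|^{-l(10+\alpha)}$ is not attained at $l=0$: it is $+\infty$, so the estimate you extract from (\ref{3.10}) is vacuous. (And if instead $|3|=1$, your maximum claim becomes trivially true but (\ref{3.8}) fails outright, so Theorem~\ref{Thm 3.6} cannot be invoked at all; either way the $p=1$ branch does not deliver the corollary.)

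The correct choice for $\alpha>-10$ is $p=-1$, as the paper itself signals in Corollary~\ref{Cor 3.8}, where $p=-1$ is taken for a constant bound (the case $\alpha=0$). With $p=-1$, condition (\ref{3.8}) reads $\lim_{k\to\infty}|59049|^{k}\,\xi(3^{k}x,3^{k}y)=\lim_{k\to\infty}C'\,|3|^{k(10+\alpha)}=0$, which holds precisely because $|3|^{10+\alpha}<1$; and (\ref{3.10}) becomes $\max_{l\ge 0}|59049|^{l+1}\,\xi(3^{l}x,3^{l}x)=3\theta\,|3|^{10}|x|^{\alpha}\max_{l\ge 0}|3|^{l(10+\alpha)}$, whose maximum \emph{is} at $l=0$, giving $3\theta\,|3|^{10}|x|^{\alpha}\le \frac{3\theta}{|3|^{\alpha}}|x|^{\alpha}$, since $|3|^{10}\le |3|^{-\alpha}$ exactly when $\alpha\ge -10$. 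This matches the stated estimate up to the paper's own cosmetic replacement of the real coefficient $3$ in $\xi(x,x)=3\theta|x|^{\alpha}$ by $|3|$ (note $\xi$ is real-valued, so its three summands add with ordinary arithmetic, as you correctly wrote). The repair is local---flip the branch to $p=-1$ and redo the two scalings---but as written both your verification of (\ref{3.8}) and your evaluation of the maximum in (\ref{3.10}) are false.
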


\begin{proof}
It is simple to obtain the required result by selecting $$\xi(x,y)=\theta\left(\left|x\right|^{\frac{\alpha}{2}}\left|y\right|^{\frac{\alpha}{2}}+\left(\left|x\right|^{\alpha}+\left|y\right|^{\alpha}\right)\right)$$
in Theorem \ref{Thm 3.6}.
\end{proof}


\section{Proper examples}
We wind up this investigation with two proper examples. The famous counter-example provided by Gajda \cite{Gajda} enthused to prove the non-stability of the equations (\ref{1.1}) and (\ref{1.2}) for singular cases. In this section we illustrate that the stability results of functional equations (\ref{1.1}) and (\ref{1.2}) are not valid for $q=-9$ in Corollary \ref{Cor 3.4} and $\alpha=-10$ in Corollary \ref{Cor 3.9},  respectively.
\begin{example}\label{Ex 4.1} 
Let us consider the function
\begin{equation}\label{5.1}
\phi(x)=
\begin{cases}
\frac{k}{x^9}, & \text{for $x\in (1,\infty)$}\\
k, & \text{otherwise}
\end{cases}
\end{equation}
where $\phi:\mathbb{R^{*}}\longrightarrow \mathbb{R}$. Let $g:\mathbb{R^{*}}\longrightarrow \mathbb{R}$ be defined by
\begin{equation}\label{5.2}
g(x)=\sum_{m=0}^{\infty}19683^{-m}\phi(3^{-n}x)
\end{equation}
for all $x\in \mathbb{R}$.  Let the function $g:\mathbb{R^{*}}\longrightarrow \mathbb{R}$ defined in (\ref{5.2}) satisfies the functional inequality
\begin{equation}\label{5.3}
\left|\Delta_1g(x,y)\right|\leq \frac{29525~k}{9841}\left(\left|x\right|^{-9}+\left|y\right|^{-9}\right)
\end{equation}
for all $x,y\in \mathbb{R^{*}}$. We show that there do not exist a reciprocal-nonic mapping $\mathcal{N}:\mathbb{R^{*}}\longrightarrow \mathbb{R}$ and a constant $\alpha>0$ such that
\begin{equation}\label{5.4}
\left|g(x)-\mathcal{N}(x)\right|\leq \alpha\left|x\right|^{-9}
\end{equation}
for all $x\in \mathbb{R^{*}}$. For this, let us first prove that $g$ satisfies (\ref{5.3}). By computation, we have 
$$\left|g(x)\right|=\left|\sum_{m=0}^{\infty}19683^{-m}\phi(3^{-n}x)\right|\leq \sum_{m=0}^{\infty}\frac{k}{19683^{m}}=\frac{19683}{19682}k.$$
So, we see that $f$ is bounded by $\frac{19683~k}{19682}$ on $\mathbb{R}$. If $\left|x\right|^{-9}+\left|y\right|^{-9}\geq 1,$ then the left hand side of (\ref{5.3}) is less than $\frac{29525~k}{9841}.$ Now, suppose that $0<\left|x\right|^{-9}+\left|y\right|^{-9}<1$. Hence, there exists a positive integer $m$ such that
\begin{equation}\label{5.5}
\frac{1}{19683^{m+1}}\leq\left|x\right|^{-9}+\left|y\right|^{-9}<\frac{1}{19683^m}.
\end{equation}
Thus, the relation (\ref{5.5}) produces $19683^m\left(\left|x\right|^{-9}+\left|y\right|^{-9}\right)<1$, or equivalently; $19683^mx^{-9}<1, 19683^my^{-9}<1$. So, 
$ \frac{x^9}{19683^m}>1, \frac{y^9}{19683^m}>1$. The last inequalities imply that $\frac{x^9}{19683^{m-1}}>19683>1, \frac{y^9}{19683^{m-1}}>19683>1$ and consequently 
$$\frac{1}{3^{m-1}}(x)>1, \frac{1}{3^{m-1}}(y)>1, \frac{1}{3^{m-1}}(2x+y)>1, \frac{1}{3^{m-1}}(2x-y)>1.$$
Therefore, for each value of $m=0,1,2,\dots,n-1$, we obtain 
$$\frac{1}{3^n}(x)>1, \frac{1}{3^n}(y)>1, \frac{1}{3^n}(2x+y)>1, \frac{1}{3^n}(2x-y)>1.$$
and $\Delta_1g(3^{-n}x,3^{-n}y)=0$ for $m=0,1,2,\dots,n-1$. Using (\ref{5.1}) and the definition of $g$, we obtain
\begin{align*}
\left|\Delta_1g(x,y)\right| & \leq \sum_{m=n}^{\infty}\frac{k}{19683^m}+\sum_{m=n}^{\infty}\frac{k}{19683^m}+\frac{19684}{19683}\sum_{m=n}^{\infty}\frac{k}{19683^m}\\
&\leq \frac{59050~k}{19683}\frac{1}{19683^m}\left(1-\frac{1}{ 19683}\right)^{-1}\leq \frac{59050~k}{19682}\frac{1}{19683^m}\\
&\leq \frac{59050~k}{19682}\frac{1}{19683^{m+1}}\leq \frac{29525~k}{9841}\left(\left|x\right|^{-9}+\left|y\right|^{-9}\right)
\end{align*}
for all $x,y \in \mathbb{R^{*}}$. This means that the inequality (\ref{5.3}) holds. We claim that the reciprocal-nonic functional equation (\ref{1.1}) is not stable for $q=-9$ in Corollary \ref{Cor 3.4}. Assume that there exists a reciprocal-nonic mapping $\mathcal{N}:\mathbb{R^{*}}\longrightarrow \mathbb{R}$ satisfying (\ref{5.4}). So, we have
\begin{equation}\label{5.6}
|g(x)|\leq (\alpha +1)|x|^{-9}.
\end{equation}
However, we can choose a positive integer $m$ with $mk >\alpha +1$. If $x\in \left(1,3^{m-1}\right)$ then $3^{-n}x\in (1,\infty)$ for all $m=0,1,2,\dots,n-1$ and thus
\begin{equation*}
|g(x)| =\sum_{m=0}^{\infty}\frac{\phi(3^{-m}x)}{19683^m} \geq \sum_{m=0}^{n-1}\frac{\frac{19683^mk}{x^9}}{19683^m}=\frac{mk}{x^9}>(\alpha +1)x^{-9}
\end{equation*}
which contradicts (\ref{5.6}). Therefore, the reciprocal-nonic functional equation (\ref{1.1}) is not stable for $q=-9$ in Corollary \ref{Cor 3.4}.
\end{example}

In analogous to Example \ref{Ex 4.1}, we have the following result which acts as a counter-example for the fact that the functional equation (\ref{1.2}) is not stable for $\alpha=-10$ in Corollary \ref{Cor 3.9}. 
\begin{example}
Define the function  $\psi:\mathbb{R^{*}}\longrightarrow \mathbb{R}$ via
\begin{equation}\label{5.7}
\psi(x)=
\begin{cases}
\frac{c}{x^{10}} & \text{for $x\in (1,\infty)$}\\
c, & \text{otherwise}
\end{cases}
\end{equation}
Let $h:\mathbb{R^{*}}\longrightarrow \mathbb{R}$ be defined by
\begin{equation}\label{5.8}
h(x)=\sum_{m=0}^{\infty}59049^{-m}h(3^{-m}x)
\end{equation}
for all $m\in \mathbb{R}$. Assume that the function $h$ satisfies the functional inequality
\begin{equation}\label{5.9}
\left|\Delta_2h(x,y)\right|\leq \frac{44287~c}{14762}\left(\left|x\right|^{-10}+\left|y\right|^{-10}\right)
\end{equation}
for all $x,y\in \mathbb{R^{*}}$. Then, there do not exist a reciprocal-decic mapping $\mathcal{D}:\mathbb{R^{*}}\longrightarrow \mathbb{R}$ and a constant $\beta>0$ such that
\begin{equation}\label{5.10}
\left|h(x)-\mathcal{D}(x)\right|\leq \beta \left|x\right|^{-10}
\end{equation}
for all $x\in \mathbb{R^{*}}$. For this, we have
$$\left|h(x)\right|=\left|\sum_{m=0}^{\infty}59049^{-m}\psi(3^{-m}x)\right|\leq \sum_{m=0}^{\infty}\frac{c}{59049^{m}}=\frac{59049~c}{59048}.$$
Hence, we see that $h$ is bounded by $\frac{59049~c}{59048}$ on $\mathbb{R}$. If $\left|x\right|^{-10}+\left|y\right|^{-10}\geq 1,$ then the left hand side of (\ref{5.9}) is less than $\frac{44287~c}{14762}.$ Now, suppose that $0<\left|x\right|^{-10}+\left|y\right|^{-10}<1.$ Then, there exists a positive integer $m$ such that
\begin{equation}\label{5.11}
\frac{1}{59059^{m+1}}\leq\left|x\right|^{-10}+\left|y\right|^{-10}<\frac{1}{59049^m}.
\end{equation}
Similar to Example \ref{Ex 4.1}, the relation $\left|x\right|^{-10}+\left|y\right|^{-10}<\frac{1}{59049^m}$ implies
$$\frac{1}{3^{m-1}}(x)>1, \frac{1}{3^{m-1}}(y)>1, \frac{1}{3^{m-1}}(2x+y)>1, \frac{1}{3^{m-1}}(2x-y)>1.$$
Thus, for any $n=0,1,2,\dots,m-1$, we obtain 
$$\frac{1}{3^n}(x)>1, \frac{1}{3^n}(y)>1, \frac{1}{3^n}(2x+y)>1, \frac{1}{3^n}(2x-y)>1$$
and $\Delta_2h(3^{-n}x,3^{-n}y)=0$ for $n=0,1,2,\dots,m-1$. Applying (\ref{5.7}) and the definition of $h$, we find
\begin{align*}
\left|\Delta_2h(x,y)\right| & \leq \sum_{n=m}^{\infty}\frac{c}{59049^n}+\sum_{n=m}^{\infty}\frac{c}{59049^n}+\frac{59040}{59049}\sum_{n=k}^{\infty}\frac{c}{59049^n}\\
&\leq \frac{177148~c}{59048}\frac{1}{59049^m}\left(1-\frac{1}{59049}\right)^{-1}\\
&\leq \frac{1177148~c}{59048}\frac{1}{59049^m}\leq \frac{177148~c}{59048}\frac{1}{59049^{m+1}}\\
&\leq \frac{44287~c}{14762}\left(\left|x\right|^{-10}+\left|y\right|^{-10}\right)
\end{align*}
for all $x,y\in \mathbb{R^{*}}$. This shows that the inequality (\ref{5.9}) holds. Assume that there exists a reciprocal-decic mapping $\mathcal{D}:\mathbb{R^{*}}\longrightarrow \mathbb{R}$ satisfying (\ref{5.10}). Hence
\begin{equation}\label{5.12}
|h(x)|\leq (\beta +1)|x|^{-10}.
\end{equation}
On the other hand, we can choose a positive integer $k$ with $kc >\beta +1$. If $x\in \left(1,3^{k-1}\right)$ then $3^{-n}x\in (1,\infty)$ for all $n=0,1,2,\dots,k-1$ and so
\begin{equation*}
|h(x)| =\sum_{n=0}^{\infty}\frac{\psi(3^{-n}x)}{59049^n} \geq \sum_{n=0}^{k-1}\frac{\frac{59049^n~c}{x^{10}}}{59049^n}=\frac{kc}{x^{10}}>(\beta +1)x^{-10}
\end{equation*}
which contradicts (\ref{5.12}). Therefore, the reciprocal-decic functional equation (\ref{1.2}) is not stable for $\alpha=-10$ in Corollary \ref{Cor 3.9}.
\end{example}

\bibliographystyle{amsplain}

\begin{thebibliography}{99}

\bibitem{Aoki}
 Aoki, T., {\it On the stability of the linear transformation in Banach spaces,} J. Math.Soc. Japan.  2, 64--66,  (1950).

\bibitem{Bodaghi14(1)} 
Bodaghi, A.  and Kim, S. O., {\it Approximation on the quadratic reciprocal functional equation}, J. Func. Spac.  2014, Art. ID. 532463, 5 pages, (2014).

\bibitem{Bodaghi14(2)}
 Bodaghi, A. and Ebrahimdoost, Y., {\it On the stability of quadratic reciprocal functional equation in non-Archimedean fields}, Asian-European  J. Math.  9, No. 1, 1650002, 9 pages, (2016).

\bibitem{Bodaghi16}
 Bodaghi, A.,  Narasimman, P.,  Rassias, J. M., and  Ravi, K., {\it Ulam stability of the reciprocal functional equation in non-Archimedean fields}, Acta Math. Univ. Comenianae.  85 (1), 113--124,  (2016).

\bibitem{Bodaghi15}
Bodaghi, A., Rassias, J. M., and  Park, C., {\it Fundamental stabilities of an alternative quadratic reciprocal functional equation in non-Archimedean fields}, Proc. Jangjeon Math. Soc. 18(3), 313--320,  (2015).

\bibitem{BVS17(1)}Bodaghi, A., and Senthil Kumar, B. V., {\it Estimation of inexact reciprocal-quintic and reciprocal-sextic functional equations}, Mathematica. 59 (82), No. 1-2, 3--13, (2017). 

\bibitem{bsb}Bodaghi, A.,  Senthil Kumar, B. V., and Bagheri Vakilabad, A., {\it Various stabilities of reciprocal-septic and reciprocal-octic functional equations}, Submitted manuscript.

\bibitem{Gajda}
 Gajda, Z., {\it On the stability of additive mappings}, Int. J. Math. Math. Sci. 14,  431--434, (1991).

\bibitem{PGavruta} 
G\u{a}vruta, P., {\it A generalization of the Hyers-Ulam-Rassias stability of approximately additive mapppings,} J. Math. Anal. Appl.  184,  431--436, (1994).

\bibitem{hen} K. Hensel, {\it Uber eine neue Begrndung der Theorie der algebraischen Zahlen, Jahresber,} Deutsche
Mathematiker-Vereinigung, {\bf 6}, 83--88,  (1897).

\bibitem{Hyers} 
Hyers, D. H., {\it On the stability of the linear functional equation,} Proc. Nat. Acad. Sci. U.S.A.,  27, (1941), 222--224.


\bibitem{SOKim}
Kim, S. O.,  Senthil Kumar, B. V., and Bodaghi, A., {\it Stability and non-stability of the reciprocal-cubic and reciprocal-quartic functional equations in non-Archimedean fields}, Adv. Difference Equ., 77, 12 pages, (2017), DOI 10.1186/s13662-017-1128-z.


\bibitem{JMRassias82(1)} 
 Rassias, J. M., {\it On approximation of approximately linear mappings by linear mappings,} J. Funct. Anal., 46, 126--130,  (1982).

\bibitem{ThMRassias78} Rassias, Th. M., {\it On the stability of the linear mapping in Banach spaces,} Proc. Amer. Math. Soc., 72, 297--300,  (1978).

\bibitem{KRavi10(2)} 
Ravi, K., Rassias, J. M., and Senthil Kumar, B. V., {\it Ulam stability of generalized reciprocal functional equation in several variables,} Int. J. App. Math. Stat., {\bf 19} (D10), 1--19,  (2010).

\bibitem{KRavi10(4)}
Ravi, K., Rassias, J. M., and Senthil Kumar, B. V., {\it A fixed point approach to the generalized Hyers-Ulam stability of reciprocal difference and adjoint functional equations}, Thai J. Math. 8(3), 469--481,  (2010).

\bibitem{KRavi11(1)}
Ravi, K., Rassias, J. M., and Senthil Kumar, B. V., {\it Ulam stability of reciprocal difference and adjoint functional equations}, Aust. J. Math. Anal. Appl. 8(1), Art. 13, 1--18,  (2011). 

\bibitem{KRavi13(2)}
Ravi, K., Rassias, J. M., and Senthil Kumar, B. V.,  {\it Generalized Hyers-Ulam stability and examples of non-stability of reciprocal type functional equation in several variables}, Indian J. Sci. 2 (3), 13--22,  (2013).


\bibitem{KRavi13(3)}
Ravi, K., Rassias, J. M., and Senthil Kumar, B. V., {\it Example for the stability of reciprocal type functional equation controlled by product of different powers in singular case}, Adv. Pure Math. 61, 16821--16825,  (2013).
 

\bibitem{KRavi15} Ravi, K., Rassias, J. M., and Senthil Kumar, B. V., {\it Ulam stability of a generalized reciprocal type functional
equation in non-Archimedean fields}, Arabian J. Math., {\bf 4} (2015), 117--126.

\bibitem{KRavi15} Ravi, K., and Senthil Kumar, B. V.,{\it Generalized Hyers-Ulam Stability of a system of bi-reciprocal functional equations}, European J. Pure and Appl. Math. 8 (2) (2015), 283-293.

\bibitem{KRavi11(3)}
Ravi, K.,  Thandapani, E., and Senthil Kumar, B. V., {\it Stability of reciprocal type functional equations}, PanAmerican Math. J. 21(1), 59--70,  (2011).

\bibitem{KRavi08(1)}
Ravi, K., and Senthil Kumar, B. V., {\it Ulam-G\u{a}vruta-Rassias stability of Rassias reciprocal functional equation,} Global J. of Appl. Math. and Math. Sci.,  3 (1-2),  57--79, (2010).

\bibitem{KRavi12}Ravi, K., and Senthil Kumar,  B. V., {\it Stability and geometrical interpretation of reciprocal functional equation}, Asian J. Current Engg. Math. 1 (5), 300--304,  (2012).

\bibitem{BVS16(1)}Senthil Kumar, B. V., Kumar, A., and Suresh, G., {\it Functional equations related to spatial filtering in image enhancement}, Int. J. Control Theory Appl., 9(28), 555-564,  (2016).

\bibitem{BVS16(2)}Senthil Kumar, B. V., Rassias, J. M., and Ravi, K., {\it Ulam stability of a bi-reciprocal functional equation in quasi-$\beta$-normed spaces}, Novi Sad J. Math. 46 (2) (2016), 1-11.

\bibitem{BVS16(3)}
Senthil Kumar, V., B and Ravi, K., {\it Ulam stability of a reciprocal functional equation in quasi-$\beta$-normed spaces}, Global J. Pure Appl. Math., 12(1), 125--128, (2016).


\bibitem{BVS16(4)}
Senthil Kumar, B. V., Ravi, K., and  Rassias, J. M., {\it Solution and generalized Ulam-Hyers stability of a reciprocal type functional equation in non-Archimedean fields}, World Scientific News.  31,  71--81, (2016).


\bibitem{SMUlam}Ulam, S. M., {\it Problems in Modern Mathematics,} Chapter VI, Wiley-Interscience, New York, (1964).

\end{thebibliography}

\end{document}